\newcommand{\hide}[1]{}
\numberwithin{equation}{section}
\def\Im{{\sf Im}\,}
\def\eps{\varepsilon}
\newcommand{\D}{\mathbb D}
\newcommand{\R}{\mathbb R}
\newcommand{\C}{\mathbb C}
\newcommand{\N}{\mathbb N}
\def\Im{{\sf Im}\,}
\def\Im{{\sf Im}\,}
\def\Im{{\sf Im}\,}
\def\1#1{\overline{#1}}
\def\2#1{\widetilde{#1}}
\def\3#1{\widehat{#1}}
\def\4#1{\mathbb{#1}}
\def\5#1{\frak{#1}}
\def\6#1{{\mathcal{#1}}}
\def\Im{{\sf Im}\,}
\newcommand{\mcite}[1]{\csname b@#1\endcsname}
\theoremstyle{theorem}
\def\Im{{\sf Im}\,}
\newtheorem{satz}{Theorem}
\newtheorem{theorem}{Theorem}[section]
\newtheorem{lemma}[theorem]{Lemma}
\theoremstyle{definition}
\newtheorem{example}[theorem]{Example}
\theoremstyle{remark}
\newtheorem{remark}{Remark}
\newtheorem{problem1}[theorem]{Problem}
\numberwithin{equation}{section}
\title[Blow--up Solutions of Liouville's Equation and Quasi--Normality]{Blow--up Solutions of Liouville's Equation and\\[2mm] Quasi--Normality}
\author[J. Grahl]{J\"urgen Grahl}
\address{J. Grahl: Department of Mathematics, University of W\"urzburg, Emil
  Fischer Strasse 40, 97074, W\"urzburg, Germany.}
\email{grahl@mathematik.uni-wuerzburg.de}
\author[D. Kraus]{Daniela Kraus}
\address{D. Kraus: Department of Mathematics, University of W\"urzburg, Emil Fischer Strasse 40, 97074, W\"urzburg, Germany.} \email{dakraus@mathematik.uni-wuerzburg.de}
\author[O. Roth]{Oliver Roth}
\address{O. Roth: Department of Mathematics, University of W\"urzburg, Emil Fischer Strasse 40, 97074, W\"urzburg, Germany.} \email{roth@mathematik.uni-wuerzburg.de}
\subjclass[2010]{Primary 30D45, 35J65; Secondary  30C80}
\keywords{quasinormal families; bubbling; semilinear elliptic problem;
  exponential nonlinearities, Schwarz lemma}
\dedicatory{Dedicated to the Memory of Professor Stephan Ruscheweyh -- our Teacher, Mentor, and Friend}
\long\def\REM#1{\relax}
\begin{document}
\maketitle

\selectlanguage{english}
\begin{abstract} We prove that
  the  family $\mathcal{F}_C(D)$ of  all meromorphic functions $f$  on a domain
  $D\subseteq \C$ with the property that the spherical
  area of the  image domain $f(D)$  is uniformly bounded by $C
\pi$  is quasi--normal of order $\le C$. We also discuss the close relations between
this result and the 
well--known work of Br\'ezis and Merle on blow--up solutions of Liouville's
equation. These results are completely in the spirit of Gromov's compactness
theorem, as pointed out at the end of the paper.
\end{abstract}

\renewcommand{\thefootnote}{\fnsymbol{footnote}}
\setcounter{footnote}{2}

\section{Introduction}

 In their celebrated paper \cite{BrezisMerle},  Br\'ezis and Merle have
 pioneered the  study of bubbling phenomena for solutions of semilinear elliptic PDEs
 with exponential nonlinearity in two dimensions. 
At the core of their work is the ``compactness--concentration'' principle,
which roughly says  that
a loss of compactness of solutions of the 
twodimensional nonlinear equation 
$$
-\Delta u = V (z)e^{2u} \, .
$$
implies the existence of finitely many blow-up points (bubbles):

\begin{satz}[Br\'ezis--Merle \cite{BrezisMerle}]
  Let $D\subseteq \C$ be a bounded domain  and $p \ge 1$. Suppose that $(u_n)$ is a sequence of (weak) solutions of
  \begin{equation} \label{eq:pde1}
-\Delta u_n = V_n (z)e^{2u_n} \quad \text{ in } D \, , 
    \end{equation}
with $ 0 \le V_n \le C_1$ and $||e^{2u_n}||_{L^p(D)} \le
    C_2$    for some constants $C_1>0$ and $C_2>0$. Then -- after taking a
    subsequence -- one of the following alternatives holds:
    \begin{enumerate}
    \item either $(u_n)$ is locally bounded in $D$ or $u_n \to -\infty$ locally uniformly in $D$;
\item there exists a finite nonempty set $S \subseteq D$ with the
  following properties:
\begin{itemize}
\item[(2a)] (Bubbling)\\
  $u_n \to
  -\infty$ locally uniformly in $D \setminus S$ and for each $p \in S$ there
  is a sequence $(z_n) \in D$ such that $z_n \to p$ and $u_n(z_n) \to
  +\infty$. 
\item[(2b)] (Mass Concentration)\\For each $p \in S$ there is  $\alpha_p \ge 1$ such that in the
measure theoretic sense
$$\frac{1}{\pi} e^{2 u_n}  \to  \sum \limits_{p \in S} \alpha_p \delta_{p} \,
,   $$
 that is,
\begin{equation} \label{eq:conv}
\frac{1}{\pi} \int \limits_D e^{2 u_n(z)} \psi(z) \, dx dy \to \sum \limits_{p
\in S} \alpha_p \psi(p) 
\end{equation}
for any continuous function $\psi : D \to \R$ with compact support in $D$.
\end{itemize}
  \end{enumerate}
\end{satz}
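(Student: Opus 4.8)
The plan is to run the potential--theoretic argument behind this theorem of Br\'ezis and Merle, whose engine is a sharp Moser--Trudinger type estimate. \emph{Step 1 (master estimate).} I would first establish: if $w$ solves $-\Delta w=g$ in a bounded domain $\Omega\subseteq\C$ with $w=0$ on $\partial\Omega$ and $g\in L^1(\Omega)$, then for every $\delta\in(0,4\pi)$
\[
\int_\Omega\exp\!\Big(\tfrac{(4\pi-\delta)\,|w|}{\|g\|_{L^1(\Omega)}}\Big)\,dx\,dy\ \le\ \tfrac{4\pi^2}{\delta}\,(\diam\Omega)^2 .
\]
One writes $w$ through its Green potential, estimates $|w(z)|\le\tfrac1{2\pi}\int_\Omega\log\tfrac1{|z-\zeta|}\,|g(\zeta)|\,d\zeta$, applies Jensen's inequality to the probability measure $|g|\,d\zeta/\|g\|_{L^1(\Omega)}$, and then integrates in $z$, using that $\zeta\mapsto|z-\zeta|^{-\alpha}$ is locally integrable in the plane for $\alpha<2$.

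\emph{Step 2 (concentration set; regularity away from it).} Put $f_n:=V_ne^{2u_n}\ge0$; from $0\le V_n\le C_1$ and the $L^p$ bound, $\sup_n\|f_n\|_{L^1(D)}<\infty$, so after a subsequence $\mu_n:=f_n\,dx\,dy\rightharpoonup\mu$ weakly-$*$. Set
\[
S:=\{z_0\in D:\ \mu(\{z_0\})\ge 2\pi\},
\]
a finite set since $\mu(D)<\infty$. If $z_0\notin S$, choose $r$ with $\overline{B_{2r}(z_0)}\subseteq D$ and $\mu(\overline{B_{2r}(z_0)})<2\pi$; then $\|f_n\|_{L^1(B_{2r}(z_0))}<2\pi$ for large $n$, by upper semicontinuity of mass on compacta. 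Splitting $u_n=w_n+h_n$ on $B_{2r}(z_0)$ into the nonnegative Green potential $w_n$ of $f_n$ and a harmonic part, Step~1 bounds $\|e^{2w_n}\|_{L^q(B_{2r}(z_0))}$ for a suitable $q>1$, while $h_n\le u_n$, the elementary inequality $u_n^+\le\tfrac12 e^{2u_n}$, and the mean value property bound $h_n$ from above on $B_r(z_0)$. Hence $f_n$ is bounded in $L^q$ near $z_0$; $L^q$ elliptic regularity ($W^{2,q}\hookrightarrow C^0$ in the plane for $q>1$) then makes $w_n$, and so $u_n$, uniformly bounded above on compact subsets of $D\setminus S$, and the same $L^q$ bound shows $\mu$ has no atom off $S$.

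\emph{Step 3 (dichotomy and conclusion).} Since $D$ is a plane domain and $S$ finite, $D\setminus S$ is connected; there write $u_n=v_n+g_n$ locally, with $-\Delta v_n=f_n$ ($v_n$ bounded as $f_n$ is now locally $L^q$) and $g_n$ harmonic. As $v_n\ge0$ and $u_n$ is locally bounded above (Step~2), $g_n=u_n-v_n$ is locally bounded above, so Harnack's inequality for the nonnegative harmonic functions obtained by subtracting $g_n$ from a local upper bound gives, along a subsequence, exactly one of: (a)~$g_n$ locally bounded, hence $u_n$ locally bounded in $D\setminus S$; or (b)~$g_n\to-\infty$, hence $u_n\to-\infty$ locally uniformly in $D\setminus S$. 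In case (a), $S=\varnothing$: otherwise, near $z_0\in S$ one would have $u_n\to u_\infty$ in $\loc$ of a punctured neighbourhood with $-\Delta u_\infty=\mu$ (as distributions), so $u_\infty=-\tfrac{\mu(\{z_0\})}{2\pi}\log|z-z_0|+O(1)$ and $e^{2u_\infty}\asymp|z-z_0|^{-\mu(\{z_0\})/\pi}$ has a \emph{non-integrable} singularity (since $\mu(\{z_0\})/\pi\ge2$), contradicting Fatou's lemma applied to $e^{2u_n}\to e^{2u_\infty}$ a.e.\ together with $\sup_n\|e^{2u_n}\|_{L^1(D)}<\infty$. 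Thus case (a) gives ``$u_n$ locally bounded in $D$'' and case (b) with $S=\varnothing$ gives ``$u_n\to-\infty$ locally uniformly in $D$'' --- the two alternatives of (1). In case (b) with $S\ne\varnothing$ we are in alternative (2): $u_n\to-\infty$ in $D\setminus S$, and for $p\in S$, $u_n$ cannot be uniformly bounded above on any $B_r(p)$ (else $f_n$ would be bounded in $L^\infty(B_r(p))$, forcing $\mu(\{p\})=0$), so a diagonal extraction gives $z_n\to p$ with $u_n(z_n)\to+\infty$ --- this is (2a). Finally $\nu_n:=\tfrac1\pi e^{2u_n}\,dx\,dy$ has bounded mass; along a further subsequence $\nu_n\rightharpoonup\nu$, and $e^{2u_n}\to0$ locally uniformly off $S$ forces $\nu$ to be supported on the finite set $S$, i.e.\ $\nu=\sum_{p\in S}\alpha_p\delta_p$, with $\alpha_p>0$ because $\pi\alpha_p=\lim_n\int_{B_r(p)}e^{2u_n}\ge\tfrac1{C_1}\mu(\{p\})>0$ for small $r$ --- which is (2b).

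\emph{Main obstacle.} Steps~2--3 are essentially soft once Step~1 is in hand, but they hinge decisively on the sharp constant $4\pi$ there (equivalently, the threshold $2\pi$ for an $e^{2u}$--nonlinearity): this is precisely what makes the $L^q$ regularisation of Step~2 work and, above all, what renders $e^{2u_\infty}$ non-integrable in Step~3. The genuinely delicate point is the \emph{sharp} lower bound $\alpha_p\ge1$: the argument above yields only a positive lower bound, and to reach the optimal value one must perform a blow-up analysis --- rescaling $u_n$ near $p$ by $\varepsilon_n=e^{-\max_{B_r(p)}u_n}$, extracting a limit $U$ solving Liouville's equation $-\Delta U=v_0e^{2U}$ on all of $\C$, and invoking the mass quantization $\int_\C v_0e^{2U}=4\pi$ for such entire solutions; the technical heart is to control the (a priori only bounded) potentials $V_n$ at the blow-up scale and to rule out mass escaping to the ``neck''.
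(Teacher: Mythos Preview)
Your proposal reproduces, in outline, the original potential--theoretic proof of Br\'ezis and Merle, and the steps you sketch are essentially correct (with the caveat you yourself flag: the sharp bound $\alpha_p\ge1$ does require a genuine blow--up/rescaling argument beyond the soft concentration--compactness of Steps~2--3). However, the paper does \emph{not} prove Theorem~A at all: it is stated there as a quoted result from \cite{BrezisMerle}, and no proof is given. So there is no ``paper's own proof'' of this statement to compare against.

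What the paper \emph{does} prove is Theorem~1.1, which contains the special case $V_n\equiv4$, $p=1$ of Theorem~A (via Liouville's representation $u_n=\log f_n^{\sharp}$ for locally univalent meromorphic $f_n$). The route taken there is entirely different from yours and purely complex--analytic: the integral bound is read as a uniform bound on the spherical area of $f_n(D)$; a pigeonhole argument (Lemma~4.2) then produces, for each $f_n$, three values on the sphere at uniformly positive mutual distance which are taken at most $\lfloor C\rfloor$ times; a Montel--Valiron type criterion with wandering exceptional functions (Theorem~3.1) yields quasi--normality of order $\le C$; finally Marty's theorem gives the bubbling sequences, Hurwitz' theorem forces the limit to be constant in the locally univalent case, and the mass--concentration statement is obtained by soft weak--$*$ compactness combined with the quasi--normality order bound to get $\alpha_p\ge1$. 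No Moser--Trudinger inequality, no Green potentials, no Harnack dichotomy, and no blow--up analysis enter.

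Each approach has its own reach. Yours (i.e.\ Br\'ezis--Merle's) handles variable, merely bounded potentials $V_n$ and arbitrary weak solutions, at the cost of the delicate sharp constant in Step~1 and the blow--up analysis for $\alpha_p\ge1$. The paper's approach is restricted to the constant--potential case (equivalently, to genuine meromorphic data), but within that setting it is more elementary, gives the sharp bound $\alpha_p\ge1$ for free from the quasi--normality order, and in fact proves more: it applies to \emph{all} meromorphic $f_n$, not only locally univalent ones.
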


We focus on the constant case $V_n=4$ which is related to numerous geometric
and physical problems, see Section \ref{subsec:liouville}. In this case,  Theorem A is in reality a
    result about \textit{locally univalent} meromorphic functions. This
    follows from  a classical result of Liouville \cite{Lio1853} which asserts that every solution to $-\Delta u=4 e^{2u}$ has (locally) the form
  $$ u(z)=\log f^{\sharp}(z)$$
  for some locally univalent meromorphic function $f$  and vice versa. Here,
  as it is standard, $f^{\sharp}$ denotes the spherical derivative of $f$
  defined by
  $$ f^{\sharp}(z):=\frac{|f'(z)|}{1+|f(z)|^2}\, .$$


  In this note we  prove:

\begin{theorem} \label{thm:main}
  Let $D \subseteq \C$ be a domain and $C>0$. Denote by $\mathcal{F}_C$  the
  set of all functions $f$ meromorphic on $D$ such that
  \begin{equation} \label{eq:family}
  \frac{1}{\pi} \iint \limits_{D} \big( f^{\sharp}(z) \big)^2 \, dxdy \le C \, .
\end{equation}
Then for every sequence $(f_n)$ in $\mathcal{F}_C$ -- after taking a
subsequence -- there is $f \in \mathcal{F}_C$ such that one of the following alternatives hold:
\begin{enumerate}
\item  $(f_n)$  converges locally uniformly in $D$ to $f$ (w.r.t.~the
  spherical metric);
     \item There exists a finite nonempty $S \subseteq D$ with  at
       most $C$ points with the following properties:
       \begin{itemize}
         \item[(2a)] (Bubbling)\\
     $(f_n)$  converges locally uniformly in $D \setminus S$  to $f$ and for each $p \in S$ there
  is a sequence $(z_n) \in D$ such that $z_n \to p$ and $f^{\sharp}_n(z_n) \to
  +\infty$. If each $f_n$ is locally univalent, then $f$ is constant. 
\item[(2b)] (Mass Concentration)\\
  For each $p \in S$ there is a real number $\alpha_p \ge 1$ such that in the
  measure theoretic sense
  $$ \frac{1}{\pi} \big(f_n^{\sharp}\big)^2 \to \sum \limits_{p \in S} \alpha_p \delta_p+
  \frac{1}{\pi} \big(f^{\sharp}\big)^2 \, .$$
  \end{itemize}
\end{enumerate}
  \end{theorem}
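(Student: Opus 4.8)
The plan is to deduce Theorem~\ref{thm:main} from the Br\'ezis--Merle dichotomy (Theorem~A) by exploiting the Liouville correspondence $u=\log f^\sharp$, together with a standard normal-families rescaling argument to upgrade the measure-theoretic information into the precise picture for the $f_n$ themselves. First I would observe that the area condition \eqref{eq:family} is exactly the hypothesis $\|e^{2u_n}\|_{L^1(D)}\le C\pi$; this gives the $p=1$ case of Theorem~A provided we can also verify local solvability. Writing $u_n=\log f_n^\sharp$ only makes sense where $f_n$ is locally univalent and finite, so I would first reduce to that situation: away from the (isolated) critical points and poles of $f_n$ one has $-\Delta u_n = 4 e^{2u_n}$ in the classical sense, and the critical points/poles contribute only a discrete set, which a capacity/removability argument (as in Br\'ezis--Merle) shows does not affect the weak formulation. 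Thus $(u_n)$ is a sequence of weak solutions with $V_n\equiv 4$ and uniform $L^1$ bound on $e^{2u_n}$, and Theorem~A applies.

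Next I would translate the two alternatives of Theorem~A. In alternative~(1) of Theorem~A, either $(u_n)$ is locally bounded or $u_n\to-\infty$ locally uniformly. Local boundedness of $u_n=\log f_n^\sharp$ means $f_n^\sharp$ is locally bounded above; by Marty's theorem this makes $(\mathcal F_C$-restricted$)$ $(f_n)$ a normal family, so a subsequence converges locally uniformly in the spherical metric to some meromorphic (or $\equiv\infty$) limit $f$, and the area bound passes to the limit by Fatou, giving $f\in\mathcal F_C$. This is case~(1) of the theorem. If instead $u_n\to-\infty$ locally uniformly, then $f_n^\sharp\to 0$ locally uniformly, so again $(f_n)$ is normal and any subsequential limit $f$ is a constant; this too falls under case~(1) with $f$ constant. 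In alternative~(2) of Theorem~A we get the finite set $S$, with $\#S\le C$ because each $\alpha_p\ge1$ and $\sum_{p\in S}\alpha_p\le C$ by \eqref{eq:conv} with $\psi\nearrow 1$ on compacta together with the mass bound; on $D\setminus S$ we have $u_n\to-\infty$ locally uniformly, hence $f_n^\sharp\to 0$ locally uniformly off $S$, so $(f_n)$ is normal on $D\setminus S$ and, after a further subsequence, converges locally uniformly there to some $f$ which, if each $f_n$ is locally univalent, must be constant (a nonconstant locally uniform limit would have $f^\sharp\not\equiv 0$, contradicting $f_n^\sharp\to 0$). The bubbling statement~(2a) is immediate from (2a) of Theorem~A via $u_n(z_n)\to+\infty \iff f_n^\sharp(z_n)\to+\infty$.

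The remaining point is the mass-concentration formula~(2b), where I must account for the extra term $\tfrac1\pi(f^\sharp)^2$. Here I would argue that on a small punctured disk around each $p\in S$ the measures $\tfrac1\pi(f_n^\sharp)^2\,dxdy$ converge weakly to $\alpha_p\delta_p$ \emph{plus} the weak limit of the restrictions of these measures to $D\setminus S$; since on compact subsets of $D\setminus S$ the convergence $f_n^\sharp\to f^\sharp$ is uniform, that latter weak limit is exactly $\tfrac1\pi(f^\sharp)^2\,dxdy$. Splitting a test function $\psi$ via a partition of unity subordinate to a cover of $D$ by small balls around the points of $S$ and their complement, and passing to the limit in each piece, yields precisely
$$\frac1\pi\big(f_n^\sharp\big)^2 \longrightarrow \sum_{p\in S}\alpha_p\delta_p + \frac1\pi\big(f^\sharp\big)^2$$
in the measure-theoretic sense. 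The main obstacle I anticipate is the careful handling of the first reduction step: making rigorous that the critical points and poles of the $f_n$ are genuinely removable for the weak PDE and that, more importantly, no bubbling ``mass'' is hidden at a pole or critical point in a way that would invalidate applying Theorem~A with $V_n\equiv 4$ uniformly. Once that reduction is secured, the rest is a fairly mechanical transcription of Theorem~A through Marty's theorem and Fatou's lemma.
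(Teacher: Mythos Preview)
Your reduction to Theorem~A has a genuine gap that you correctly flag but underestimate. At a critical point $z_0$ of $f_n$ (a zero of $f_n'$ of order $k\ge 1$) one has $f_n^\sharp(z)\sim c\,|z-z_0|^k$, so $u_n(z)=\log f_n^\sharp(z)=k\log|z-z_0|+\text{smooth}$, and in the distributional sense
\[
-\Delta u_n \;=\; -2\pi k\,\delta_{z_0} \;+\; 4e^{2u_n}
\]
near $z_0$; the same happens at poles of order $\ge 2$. These negative Dirac masses are \emph{not} removable: $u_n$ is genuinely not a weak solution of $-\Delta u=4e^{2u}$ on $D$, and one cannot rewrite the equation as $-\Delta u_n=V_n e^{2u_n}$ with $V_n\ge 0$. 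Hence Theorem~A simply does not apply outside the locally univalent case, and Theorem~\ref{thm:main} is a strict extension of Theorem~A (for $V_n\equiv 4$, $p=1$), not a consequence of it; the paper says so explicitly. A secondary symptom of the same problem: alternative~(2) of Theorem~A forces $u_n\to-\infty$ on $D\setminus S$, hence $f$ constant, whereas in Theorem~\ref{thm:main} the limit $f$ may well be nonconstant when the $f_n$ are not locally univalent --- so the extra term $\tfrac1\pi(f^\sharp)^2$ in~(2b) could not possibly come out of Theorem~A.

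The paper's proof takes a completely different, purely complex-analytic route that bypasses the PDE. The key input is a quasi-normality criterion of Montel--Valiron type with \emph{wandering} exceptional values (Theorem~\ref{3ExceptionalFunctionsQuasiNormal}): a family is quasi-normal of order $\le q$ if each $f$ assumes three values $a_f,b_f,c_f$, at uniform spherical separation $\ge\varepsilon$, at most $p\le q\le r$ times respectively. The area bound~\eqref{eq:family} guarantees, for each $f\in\mathcal F_C$, a set of spherical measure $\ge\varepsilon(C)>0$ of values taken at most $\lfloor C\rfloor$ times; any such set contains three $\delta$-separated points (an elementary covering argument). This yields quasi-normality of order $\le C$ directly. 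The extension of the limit $f$ across $S$ then comes from a removable-singularity lemma (finite spherical area plus Picard's Great Theorem); constancy of $f$ in the locally univalent case uses superharmonicity of $\log f_n^\sharp$ and the minimum principle, together with the compactness of $\mathcal L_c(D)$; and~(2b) is a routine weak-$*$ compactness argument, essentially the same splitting you describe.
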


  Condition (\ref{eq:family}) means that the spherical area of the image
  domain $f(D)$ on the Riemann sphere $\hat{\C}$ is $\le C \pi$ (counting multiplicities and with the normalization that the area of  $\hat{\C}$ is $=\pi$).

\section{Remarks and Questions}

\subsection{Theorem \ref{thm:main} vs.~Theorem A}

 Theorem \ref{thm:main}
    extends the result of Br\'ezis--Merle \cite{BrezisMerle}  (for $V_n=4$, $p=1$) 
    from the case of locally univalent meromorphic functions to all
    meromorphic functions in $\mathcal{F}_C$. This follows from Liouville's
    theorem mentioned above and Hurwitz'
    theorem, which says that the limit function of a locally uniformly
    convergent sequence of locally
    univalent meromorphic functions is either (i) locally univalent or (ii) constant.
In case (i), $(\log f^{\sharp}_n)$ is locally uniformly bounded in $D$; in
case (ii), $\log f^{\sharp}_n\to -\infty$ locally uniformly in $D$.

\subsection{Bubbling \& Quasi--Normality}
  Recall 
  that a family $\mathcal{F}$ of meromorphic functions on a domain $D$ is
  called quasi--normal if every sequence in $\mathcal{F}$ has a subsequence
  which converges locally uniformly with respect to the spherical metric on $D
  \setminus S$ where the set $S$ (which may depend on the extracted
  subsequence) has no accumulation point in $D$. If  $S$ has always at most $\nu \ge 0$ points, then $\mathcal{F}$ is said to
  be quasinormal of order $\le \nu$.  In particular, Theorem \ref{thm:main}
  says that ${\mathcal F}_C$ is quasi--normal of order at most $C$.
  The notion of quasi--normality generalizes the fundamental concept of
  normal families (= quasi--normal families of order zero), and 
has been introduced by Montel \cite{Montel1922} as early as 1922. We refer
  to \cite[Appendix]{Sc} for background on  quasi--normal families.

\subsection{No Bubbling:  Small area and bounded weighted area} \label{sec:nobub}
\begin{itemize}
  \item[(i)]
If $C<1$, then Theorem \ref{thm:main} says that $\mathcal{F}_C$ is a compact
family. For the case that $D$ is the unit disc $\D:=\{z \in \C \, : |z|<1\}$,
this was already observed by Montel (1934, \cite{Montel1934}); if $D$ is an
arbitrary (hyperbolic) domain this result is stated explicitly in 
\cite[Theorem 2]{Yamashita2000}.
\item[(ii)]
 If $V_n=4$ and $p>1$ in Theorem A, then  there are no bubbles. This follows from the work of
 Aulaskari and Lappan (1988, \cite{AulaskariLappan}), who have shown that for each fixed $C>0$
 and $s>2$ the family of all
 meromorphic functions $f$ in $\D$  satisfying
 \begin{align} \label{eq:exponent}
   \frac{1}{\pi} \int \limits_{\D} \big(f^{\sharp}(z)\big)^s \, dx dy \le C
 \end{align}
 is a normal family. Hence Theorem \ref{thm:main} handles the borderline case
 $s=2$ where bubbles actually occur (take e.g.~$f_n(z)=nz$), but at most
 finitely many. 
 \item[(iii)] Reducing the exponent $s$ in (\ref{eq:exponent}) below the
   crucial threshold $s=2$ does not even guarantee quasi--normality. A simple
   example is given by the family $\mathcal{F}=\{f_n(z):=e^{inz} \, : \, n=1,2, \ldots\}$. It is
   easy to show that (\ref{eq:exponent}) holds for $s=1$ with $C=1$ for all
   $f \in \mathcal{F}$, but $f_n^{\sharp}(z) \to +\infty$ as $n \to \infty$ for every $z \in \D$
   with $\Im z=0$.  Hence every point in the interval $S:=(-1,1)$ is a
   ``bubble''. In particular, $\mathcal{F}$ is not quasi--normal on $\D$. Note
   that $(f_n)$ is normal in $\D \setminus S$ and in fact converges to a 
   constant limit function on each of the components of $\D \setminus S$.
   \begin{problem1} Suppose $1<s<2$ and $\mathcal{F}$  is a family of meromorphic functions in $\D$
    satisfying (\ref{eq:exponent}). It seems likely that $\mathcal{F}$ is not
    necessarily   quasi--normal in $\D$. Is it
    possible to quantify the maximal size of possible exceptional sets $S$  of
    $\mathcal{F}$ in terms of the exponent $s$\,?
     \end{problem1}
  \end{itemize}

  \subsection{No Bubbling: Local univalence} \label{NBLU}
The intersection of Theorem A  and Theorem
\ref{thm:main} deals with the case of all  \textit{locally univalent}
functions in $\mathcal{F}_C$. In fact, in this ``locally univalent'' case,  there
is the following general ``no bubbles'' criterion:
For a  domain $D\subseteq \C$ and a family $\mathcal{L}$  of locally univalent meromorphic functions on $D$, the following are equivalent:
  \begin{itemize}
    \item[(i)] $\mathcal{L}$ is compact;
     \item[(ii)] For each compact set $K
    \subseteq D$ there is a positive constant $c>0$ such that
    $$ c \le f^{\sharp}(z)  \quad \text{ for all } f \in \mathcal{L}
    \text{ and all } z \in K \ .$$
  \end{itemize}
 The non--trivial implication is (ii) $\Longrightarrow$ (i) and follows from the
general fact  that each of the families
$$ \mathcal{L}_c(D):=\{f \text{ meromorphic in } D \, : \, f^{\sharp}(z) \ge c \text{ for all }
z \in D \} \, , \qquad c>0 \, ,$$
is compact (see
\cite[Corollary 8]{BrezisMerle} and \cite{GN}).
 Hence compactness of $\mathcal{L}$ just means that
the family $\{f^{\sharp} \, : \, f \in \mathcal{L}\}$ is locally uniformly bounded away
from zero. This fact plays a crucial role in our proof of Theorem
\ref{thm:main}{. The set $\mathcal{L}_c(D)$ is also interesting in its own right and has been
studied  e.g.~in \cite{BrezisMerle,FKR,GN,Janne,Sha,St}; see also Section \ref{NBLU2} below. 

\begin{remark}
It is perhaps a bit surprising that $\mathcal{L}_c(\D)$ is not empty only if $c \le
1/2$, see \cite{GN}. The following beautiful proof was shown to us some years ago by Stephan (see also
\cite[Proof of Theorem 1.2]{FRold}): Let $f \in \mathcal{L}_c(\D)$. Postcomposing $f$
with a rigid motion of the sphere does not change the spherical derivative, so
we may assume w.l.o.g.~that $f(0)=0$.
Then $f(z)/(z f'(z))$ is {\it holomorphic} in $\D$ with value $1$ at $z=0$. Hence the maximum
principle implies
$$ 1 \le \max \limits_{|z|=\rho} \left| \frac{f(z)}{z f'(z)} \right|=
\frac{1}{\rho}  \, \max \limits_{|z|=\rho} \left(
  \frac{|f(z)|}{1+|f(z)|^2} \frac{1}{f^{\sharp}(z)} \right) \le 
\frac{1}{c \rho}  \, \max \limits_{|z|=\rho} 
 \frac{|f(z)|}{1+|f(z)|^2} \le \frac{1}{2c\rho} \, , \quad 0<\rho<1 \, ,$$
so $c \le 1/2$. It is now obvious that if $D$ contains a disk of radius $R>0$,
then $\mathcal{L}_c(D)$ is possibly not empty only if $c \le 1/(2 R)$.
  \end{remark}

\subsection{Quantification and Schwarz Lemmas: Small Area}
Let  $\mathcal{F}$ be  a normal family of meromorphic functions on a domain
$D\subseteq \C$.  Then, by Marty's criterion, the quantity
$$ M_{\mathcal{F}}(z):=\sup \limits_{f \in \mathcal F} f^{\sharp}(z)$$
is finite for each $z \in D$. Geometrically, $M_{\mathcal{F}}(z)$  provides the maximal spherical--euclidean
distortion of a function $f \in \mathcal{F}$ at the point $z$. Finding
$M_{\mathcal{F}}$ for a given family $\mathcal{F}$ is an ubiquitous task in
  Geometric Function Theory, and quite often a challenging endeavour.
  
As in Theorem \ref{thm:main}, we  focus on the families $\mathcal{F}_C$, for which we now write $\mathcal{F}_C(D)$ in order to emphasize the dependence on the domain $D$. 
If $C<1$, then
$$ M_{\mathcal{F}_C(\D)}(z) \le \sqrt{\frac{C}{1-C}} \,\frac{1}{1-|z|^2} \, , \qquad
z \in \D \, .$$
This is essentially a classical result of Dufresnoy (1941,
\cite{Dufresnoy1941}), see also \cite[p.~83]{Sc}. The result of Dufresnoy has
been extended from the disk $\D$ to arbitrary hyperbolic domains $D$ by Yamashita
(2000, \cite{Yamashita2000}), who showed that for any hyperbolic domain $D
\subseteq \C$, 
$$ M_{\mathcal{F}_C(D)}(z) \le \sqrt{\frac{C}{1-C}} \, \lambda_{D}(z) \, , \qquad
z \in D \, , $$
with equality for one -- and then for any -- point $z \in D$ if and only if
$D$ is simply connected.
Here, $\lambda_D(z)$ is the density of the Poincar\'e metric on $D$ (with
curvature $-4$).

In the spirit of Schwarz' lemma, these results say: for any $C<1$ each map $f \in \mathcal{F}_C(D)$ is a
Lipschitz-map from $D$ into the Riemann sphere $\hat{\C}$, provided  both are equipped with
their ``natural'' geometries (hyperbolic resp.~spherical).

\subsection{Quantification and Schwarz Lemmas: The problem of Br\'ezis--Merle} \label{NBLU2}
 Let 
$\mathcal{L}_c(D)$ be the family of all meromorphic functions on $D$ with spherical
derivative bounded from below by $c>0$. Recall from Section \ref{NBLU} that $\mathcal{L}_c(D)$ is
compact. Suppose that $K$ is a compact subset of $D$. Br\'ezis \& Merle
\cite[p.~1239, Open problem 3]{BrezisMerle} have posed, amongst others, the problem to find
$$ \max \limits_{f \in \mathcal{L}_c(D), \, z \in K} f^{\sharp}(z) \, .$$
It seems that little is known about this problem. Shafrir \cite{Sha} has proved
that
$$ \sup \limits_{z \in K} f^{\sharp}(z) \le \frac{e^{C_2}}{c} \, , \qquad f
  \in \mathcal{L}_c(D) \, , $$
  where $C_2>0$ is a constant depending only on $K$.
In the case $D=\D$  Steinmetz \cite{St} has quantified Shafrir's estimate by proving that
\begin{equation} \label{eq:stein}
\sup \limits_{f \in \mathcal{L}_c(\D)} f^{\sharp}(z) \le  \frac{1}{c}
\frac{1}{(1-|z|^2)^2} \, , \qquad z\in \D \, .
\end{equation}
 In \cite{FKR} this has  been improved to
\begin{equation} \label{eq:FKR}
 \sup \limits_{f \in \mathcal{L}_c(\D)} f^{\sharp}(z) \le \frac{1+\sqrt{1-4 c^2 \left(1-|z|^2 \right)^2}}{2 c
   \left(1-|z|^2 \right)^2} \, , \quad z \in \D \, ,
 \end{equation}
  showing that the dependence of the upper bound  on the parameter $c$ found by Shafrir
  resp.~Steinmetz is not best possible. Asymptotically however, both
  estimates (\ref{eq:stein}) and (\ref{eq:FKR}) yield 
\begin{equation} \label{eq:5a} \limsup \limits_{|z| \to 1} \left(1-|z|^2 \right)^2 \,
  f^{\sharp}(z)  \le \frac{1}{c} \, , \qquad f \in \mathcal{L}_c(\D) \, . 
\end{equation}
Recent work of Gr\"ohn \cite[Theorem 3]{Janne}, which is based in parts on Carleson's celebrated solution of the
$H^{\infty}$--interpolation problem, shows that there is a function $f
\in \bigcup_{c>0}\mathcal{L}_c(\D)$ such that
$$
\inf \limits_{n \in \N}  \left(1-|z_n|^2 \right)^2 \, f^{\sharp}(z_n)>0
\,
$$
for some sequence $(z_n)$ in $\D$ with $|z_n| \to 1$. Hence, for sufficiently small values of $c>0$ inequality (\ref{eq:5a}) is
sharp up to a multiplicative constant.  It is shown in \cite{FKR} that for all possible
values of $c$ one can replace the
number $1$ on the right--hand side of (\ref{eq:5a}) by $(3-\sqrt{5})/2 \approx
0.38$.

We finally note that a complete solution of
the Br\'ezis--Merle problem in the very special case $D=\D$ and $K=\{0\}$ has
been given in \cite{FKR}, where it is shown that
  $$ \max \limits_{f \in \mathcal{L}_c(\D)} f^{\sharp}(0)=\frac{1+\sqrt{1-4c^2}}{2c} \,
  .$$
The problem to determine  $\max \limits_{f \in \mathcal{L}_c(\D)} f^{\sharp}(z)$ for $z
\not=0$ remains open.

\subsection{Mass Quantisation}

It has been conjectured by Br\'ezis and Merle \cite[Open Problem 4]{BrezisMerle} that under
some mild regularity assumptions on the functions $V_n$ in Theorem A all the
numbers $\alpha_p$ are in fact positive \textit{integers} (Mass
Quantisation). This conjecture has been confirmed in \cite{LS} under the condition 
that  $V_n \in C(\overline{D})$ and $V_n \to V$ in $C(\overline{D})$.
It therefore seems natural to expect that also in Theorem \ref{thm:main} each $\alpha_p$ is an integer.
If in Theorem \ref{thm:main} each $f_n$ is locally univalent, then this
follows at once from \cite{LS}.

  \subsection{Remarks on Liouville's equation} \label{subsec:liouville}
 As mentioned above, Liouville's equation $-\Delta u=4 e^{2u}$ can be seen as
 the ``governing'' PDE of the set of locally univalent meromorphic
 functions. Despite, or maybe because of this, it also  arises in a variety of diverse problems in analysis, geometry and physics. 
  For instance, if  $D$ is  a bounded domain in $\R^2$ , then the Euler--Lagrange equation for the functional
\begin{equation} \label{eq:MT}
  J(v)=\frac{1}{2} \int \limits_{D} |\nabla v|^2 -8 \pi \log \int
  \limits_{D} e^v \, , \qquad v \in W^{1,2}_0(D) \, ,
\end{equation}
which is closely tied to  the well--known Moser-Trudinger inequality,
is the Liouville--type equation
\begin{equation} \label{eq:EL} 
  -\Delta v=\lambda \frac{e^v}{\int \limits_{D} e^v} \, ,  
  \end{equation}
for some constant $\lambda > 0$. The functional
(\ref{eq:MT}) and equation (\ref{eq:EL}) have been intensively studied, partly
in view of many applications such as  the problem
of prescribing Gauss curvature \cite{CGY,CY,CD},  the theory of the mean field equation
\cite{DJLW,DJLW2,CL,CL2} and  Chern-Simons theory
\cite{SY,ST,T,DJLW3,N}.

  \section{Quasi--normality and wandering exceptional functions}

  An essential step in the proof of Theorem \ref{thm:main} is to show that the
  family $\mathcal{F}_C$ is {\it quasi--normal}.   In order to prove this one
  can apply a quasi--normality criterion which has already been
  established by Montel \cite{Montel1924} and Valiron \cite{Valiron1929}. 
  The Montel--Valiron criterion  might be viewed as  a ``quasi--normal'' version of Carath\'eodory's \cite[p.~104]{Sc}
  extended  Fundamental Normality Test (FNT)   for a family $\mathcal{F}$ of
  meromorphic functions.
  In fact, we prove here an extension of the result of Montel and Valiron, which shows that one can replace
the exceptional {\it values} in the Montel--Valiron criterion by 
  exceptional {\it meromorphic functions} ``wandering'' on the sphere. 
Here ``wandering'' means that the exceptional functions are allowed to depend
on the individual members of the family $\mathcal{F}$.

\medskip

We first need to recall some standard terminology.
$\mathcal{M}(D)$ denotes the set of all meromorphic functions on $D$
and $\sigma$ denotes the spherical distance on $\hat{\C}$. A function $f$ is understood to assume the function $a$ if there is a point
$z_0\in D$ such that $f(z_0)=a(z_0)$. We call such a point  $z_0$ an $a$-point of $f$. In the case $f(z_0)=a(z_0)=\infty$ it
isn't assumed that $z_0$ is a zero of $f-a$. (Actually, for $a\equiv\infty$ the latter wouldn't make sense.) Furthermore, we do 
not take multiplicities into account, i.e. if (in the finite case) $z_0$ is a
multiple zero of $f-a$, it is counted only once.

\begin{theorem}\label{3ExceptionalFunctionsQuasiNormal}
Let $\mathcal{F}$ be a family of functions meromorphic on a domain $D\subseteq
\C$,
$\varepsilon>0$ and $p,q,r$ non-negative integers with $p\le q\le
r$. Assume that for each $f\in\mathcal{F}$ there exist functions
$a_f,b_f,c_f\in\mathcal{M}(D)\cup\{\infty\}$ such that $f$ assumes the
function $a_f$ at most $p$ times, the function $b_f$ at most $q$ times
and the function $c_f$ at most $r$ times in $D$ (ignoring multiplicities), and such that
\begin{equation}\label{DistanceBoundedQ}
\min\{\sigma(a_f(z),b_f(z)),
\sigma(a_f(z),c_f(z)),\sigma(b_f(z),c_f(z))\}\ge \varepsilon 
\end{equation}
for all $z\in D$. Then $\mathcal{F}$ is a quasi-normal family of order at
most $q$. 
\end{theorem}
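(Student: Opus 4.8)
The plan is to prove Theorem \ref{3ExceptionalFunctionsQuasiNormal} by contradiction, reducing it to a local statement and then running a "rescaling / Zalcman-type" argument adapted to moving target functions. Quasi-normality of order $\le q$ means: every sequence in $\mathcal{F}$ has a subsequence converging locally uniformly (spherically) off an exceptional set of at most $q$ points. So suppose for contradiction that some sequence $(f_n)$ in $\mathcal{F}$ has no such subsequence. The first step is to pin down where compactness fails. Since $\mathcal{F}$ is quasi-normal of order $\le q$ iff it is "quasi-normal of order $\le q$ locally at each point" (the exceptional set globally can be reassembled from a locally finite cover of $D$ by disks, provided each disk carries at most $q$ exceptional points), it suffices to show: for each point $z_0\in D$ there is a disk $\Delta$ around $z_0$ and a subsequence of $(f_n)$ which is normal on $\Delta$ except at most $q$ points. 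So from now on I work on a small disk $\Delta\Subset D$.

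The key idea is to track the moving targets. On $\Delta$, fix the sequences $a_{f_n}, b_{f_n}, c_{f_n}\in\mathcal{M}(D)\cup\{\infty\}$ from the hypothesis. Each of these is a sequence of meromorphic functions, but here is the crucial point: by the compactness result recalled in Section \ref{NBLU} --- the family $\mathcal{L}_c(D)$ of meromorphic functions with $f^\sharp\ge c$ is compact for every $c>0$ --- one cannot immediately say the target sequences are normal. Instead I would use the $\varepsilon$-separation \eqref{DistanceBoundedQ}: for every $z$, the three target values are pairwise $\ge\varepsilon$ apart on the sphere, hence each spherical "cap" of radius $\varepsilon/2$ about any point of $\Delta$'s image meets at most one of them. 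This separation is what replaces the classical hypothesis "three fixed omitted values" in Montel's FNT. Passing to a subsequence, I may assume each target sequence converges spherically-locally-uniformly on a slightly smaller disk $\Delta'$ to a limit which is either a meromorphic function or $\equiv\infty$ (the dichotomy is allowed because if the $a_{f_n}$ are not normal, one rescales them too --- but more cleanly: a sequence of meromorphic functions on a disk either has a spherically convergent subsequence or blows up; by the $\varepsilon$-separation the potential limit-target-functions $a,b,c$ remain pairwise $\ge\varepsilon$ separated, so in particular at most one of them can be $\equiv\infty$, and we have at least two genuine meromorphic limit targets $a,b$, say, with $\sigma(a(z),b(z))\ge\varepsilon$ on $\Delta'$).

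Now the heart of the argument. Suppose $(f_n)$ fails to be normal at more than $q$ points of $\Delta'$; pick $q+1$ such points $z_1,\dots,z_{q+1}$. At each $z_j$, by Marty's criterion there is a local loss of compactness, and a Zalcman-type rescaling around $z_j$ produces (along a subsequence) a nonconstant meromorphic function $g_j$ on $\C$ arising as a rescaled limit of the $f_n$ near $z_j$. The composition $\phi\circ f_n$ with a Möbius transformation $\phi=\phi_n$ sending the limit target pair $(a(z_j),b(z_j))$ to $(0,\infty)$ turns the hypothesis "$f_n$ assumes $a_{f_n}$ at most $p$ times and $b_{f_n}$ at most $q$ times" into "$\phi_n\circ f_n$ has at most $p$ zeros and at most $q$ poles near $z_j$", up to errors controlled by the convergence of the targets and the separation $\varepsilon$. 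Passing to the rescaled limit, $g_j$ (after the corresponding Möbius normalization) is a nonconstant meromorphic function on $\C$ with only finitely many zeros and finitely many poles; combined with the third separated target it omits a value (or a genuinely moving value stays bounded away), forcing $g_j$ to be rational. Here is the accounting that produces the bound $q$: across all $q+1$ blow-up points the rescaled limits $g_1,\dots,g_{q+1}$ each "consume" at least one pole (they are nonconstant rational with at most $p\le q$ zeros, hence must have a pole since they omit at least one value --- the value corresponding to the third target --- hence are not entire... wait, rational nonconstant omitting a finite value must have a pole). The poles near distinct $z_j$ are distinct, so $f_n$ has at least $q+1$ poles counted among the $b_{f_n}$-points, contradicting "$f_n$ assumes $b_{f_n}$ at most $q$ times." This contradiction shows $(f_n)$ is normal on $\Delta'$ off at most $q$ points, completing the local statement.

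The main obstacle I expect is the bookkeeping in the moving-target Zalcman rescaling: the targets $a_{f_n}, b_{f_n}, c_{f_n}$ are themselves varying, possibly degenerating (one of them $\to\infty$), and the Möbius normalizations $\phi_n$ depend on $n$ and on the blow-up point. One must verify carefully that (i) the separation \eqref{DistanceBoundedQ} survives under the rescaling so that the limit $g_j$ still "sees" three (or at least two) separated targets and is therefore not the exponential-type example of Section \ref{sec:nobub}(iii); (ii) the count of $a_{f_n}$-, $b_{f_n}$-, and $c_{f_n}$-points is lower-semicontinuous under the rescaled limit, i.e., zeros/poles of $g_j$ do not appear "for free" in the limit without being accounted for by genuine target-points of $f_n$ --- this uses Hurwitz's theorem applied to $\phi_n\circ f_n - \psi_n$ for suitable moving $\psi_n$ approximating the limit target; and (iii) the $q+1$ blow-up points can be chosen so their rescaling neighborhoods are disjoint, ensuring the pole counts add. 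Assembling (i)--(iii) with the compactness input from Section \ref{NBLU} (which guarantees the rescaled limit cannot have $g_j^\sharp$ bounded below without $g_j$ being a genuine nonconstant meromorphic function on $\C$) is where the real work lies; the global patching from the local statement is then routine.
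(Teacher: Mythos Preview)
Your localization step is wrong, and this is a genuine gap. Quasi-normality of order $\le q$ is \emph{not} a local property: knowing that on each small disk some subsequence converges off at most $q$ points does not bound the global exceptional set by $q$. Concretely, after a diagonal extraction over a countable cover by disks $\Delta_k$ you obtain a subsequence converging off $\bigcup_k S_k$ with $|S_k|\le q$ for each $k$, but $\bigl|\bigcup_k S_k\bigr|$ is in general unbounded. (What \emph{is} local is plain quasi-normality, i.e., discreteness of the exceptional set --- not the uniform cardinality bound.) So even if your local blow-up count were airtight, the theorem would not follow.

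The blow-up accounting itself also has a hole. You want each rescaled limit $g_j$ to force a $b_{f_n}$-point near $z_j$, and you argue that $g_j$ is nonconstant rational and ``must have a pole since it omits at least one value''. But a nonconstant rational function omits \emph{no} value; your own ``wait'' signals the problem. What you actually need is a Hurwitz-type statement guaranteeing that near each irregular point $z_j$ the functions $f_n-b_{f_n}$ (or their spherical analogue) eventually vanish. That requires knowing something about the \emph{limit} $F$ of $(f_n)$ off the irregular set relative to the limit targets $a,b$ --- in particular, whether $F\equiv a$ or not --- and your rescaled picture at a single $z_j$ does not see this global dichotomy.

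The paper proceeds quite differently and avoids both issues. It first quotes the lemma that the pairwise $\varepsilon$-separation \eqref{DistanceBoundedQ} makes each of the target families $\{a_f\},\{b_f\},\{c_f\}$ normal on all of $D$; after passing to convergent target subsequences $a_n\to a$, $b_n\to b$, $c_n\to c$, it collects the accumulation loci $A,B,C$ of the $a_n$-, $b_n$-, $c_n$-points of $f_n$ (with $|A|\le p$, $|B|\le q$, $|C|\le r$) and invokes the wandering-targets normality theorem from \cite{GN-Wandering} to get convergence $f_n\to F$ on $D\setminus(A\cup B\cup C)$. The cardinality bound then comes from a global maximum-principle argument, not from rescaling: if $F\not\equiv a$, then at any point $z_0\notin A$ the functions $1/(f_n-a_n)$ are holomorphic on a full disk about $z_0$ and converge on an annulus, hence on the disk --- so every irregular point lies in $A$ and there are at most $p\le q$ of them; if $F\equiv a$, the same argument with $1/(f_n-b_n)$ shows every irregular point lies in $B$. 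Either way the bound is $q$, proved globally in one stroke.
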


\begin{remark}[Exceptional Values vs.~Exceptional Functions and  Wandering vs.~Non--wandering]
Normality criteria in the spirit of Montel's or Carath\'eodory's FNT, but with exceptional functions instead of exceptional values,
have been established by Bargman, Bonk, Hinkkanen and Martin \cite{BBHM} as
well as by the first named author and Nevo \cite{GN-Wandering}. In \cite{BBHM}
a version of Montel's FNT for exceptional {\it continuous}, but not wandering
functions with disjoint graphs is
proved, while in \cite{GN-Wandering} a version of Carath\'eodory's extended FNT
for exceptional {\it meromorphic} functions wandering on the sphere is established. Theorem \ref{3ExceptionalFunctionsQuasiNormal}
extends the main result of \cite{GN-Wandering} to the case of {\it
  quasi--normal} families. It extends as well the result of Montel and Valiron
mentioned earlier, which  is just the special case of Theorem
\ref{3ExceptionalFunctionsQuasiNormal} that the exceptional functions are
constants, but might be wandering.
Even though it is tempting,  it is not possible to replace the {\it meromorphic} exceptional
functions by {\it continuous} exceptional functions in Theorem
\ref{3ExceptionalFunctionsQuasiNormal}. In fact, for merely continuous wandering exceptional
functions we neither have quasi-normality nor $Q_\alpha$-normality for
any ordinal number $\alpha$. (For the exact definition of
$Q_\alpha$-normality we refer to \cite{Nevo-Qalpha}.)  This is shown
by the same counterexample as in \cite{GN-Wandering} or Section
\ref{sec:nobub} (iii): The functions
$f_n(z):=e^{nz}$ omit the three continuous functions $a_n:\equiv 0$,
$b_n:\equiv\infty$, $c_n(z):=-e^{i n \Im(z)}$ which clearly satisfy
(\ref{DistanceBoundedQ}), but all points on the imaginary axis are
points of non--normality, so in the unit disk $(f_n)_n$ is neither
quasi-normal nor $Q_\alpha$-normal for any ordinal number $\alpha$.
\end{remark}

\section{Proofs}

\subsection{Proof of Theorem \ref{3ExceptionalFunctionsQuasiNormal}}
 
An essential tool in the proof of Theorem
\ref{3ExceptionalFunctionsQuasiNormal} is the following normality
criterion from \cite[Theorem~2]{GN-Wandering}. It is concerned  with pairs of
meromorphic functions
``wandering'' on the sphere which ``uniformly stay away from each other''. 

\begin{lemma} \label{exceptfunctnormal}
Let $\mathcal{G}$ be a family of pairs of meromorphic
functions on a domain $D$ and $\varepsilon>0$. Assume that
\begin{equation}\label{StayAway}
\sigma(a(z),b(z))\ge \varepsilon \qquad\mbox{ for all } (a,b)\in\mathcal{G}
\mbox{ and all } z\in D.
\end{equation}
Then the families $\{ a\;|\, (a,b)\in\mathcal{G} \}$ and $\{ b\;|\,
(a,b)\in\mathcal{G}\}$ are normal on $D$.  
\end{lemma}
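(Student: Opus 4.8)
\medskip
\noindent\textbf{Proof idea.}
The plan is to argue by contradiction, using the Zalcman rescaling lemma twice, and to reduce everything to a planar rigidity statement that rests on nothing beyond Liouville's theorem. It is enough to prove that $\mathcal{A}:=\{\,a:(a,b)\in\G\,\}$ is normal on $D$, since the same argument with the roles of the two coordinates interchanged then settles $\{\,b:(a,b)\in\G\,\}$. So assume $\mathcal A$ is not normal at some $z_0\in D$. By Zalcman's lemma there are pairs $(a_n,b_n)\in\G$, points $z_n\to z_0$ and radii $\rho_n\to0^+$ such that $g_n(\zeta):=a_n(z_n+\rho_n\zeta)$ converges spherically and locally uniformly on $\C$ to a \emph{nonconstant} meromorphic function $g$. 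The first -- and, I expect, the main -- idea is to track the companion functions along \emph{the same} rescaling: set $h_n(\zeta):=b_n(z_n+\rho_n\zeta)$, which are meromorphic on discs exhausting $\C$ and inherit from \eqref{StayAway} the uniform separation $\sigma\big(g_n(\zeta),h_n(\zeta)\big)\ge\eps$ for all $\zeta$ and all large $n$.

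Next I would show that $(h_n)$ is itself normal on $\C$. If not, Zalcman's lemma applied a second time, now to $(h_n)$ at a point $\zeta^\ast$ of non-normality, yields $\zeta_n\to\zeta^\ast$, $\tau_n\to0^+$ and a \emph{nonconstant} meromorphic limit $\psi$ on $\C$ of $\eta\mapsto h_n(\zeta_n+\tau_n\eta)$ along a subsequence. Because $g_n\to g$ locally uniformly and $\tau_n\to0$, the matching rescalings $\eta\mapsto g_n(\zeta_n+\tau_n\eta)$ converge to the \emph{constant} $g(\zeta^\ast)$; passing to the limit in the separation inequality gives $\sigma\big(g(\zeta^\ast),\psi(\eta)\big)\ge\eps$ for all $\eta\in\C$, i.e.\ $\psi$ omits a non-degenerate spherical disc, contradicting Picard's theorem. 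Hence $(h_n)$ is normal, and after a further subsequence $h_n\to h$ spherically and locally uniformly on $\C$, where $h$ is meromorphic on $\C$ (with $h\equiv\infty$ not excluded), and $\sigma\big(g(\zeta),h(\zeta)\big)\ge\eps$ for every $\zeta\in\C$.

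It remains to rule out such a configuration on $\C$. Writing $\chi(w_1,w_2)=|w_1-w_2|/\sqrt{(1+|w_1|^2)(1+|w_2|^2)}$ for the chordal metric (comparable to $\sigma$, so $\sigma\ge\eps$ gives $\chi\ge\delta$ for some $\delta=\delta(\eps)>0$), I would argue as follows. If $h\equiv\infty$, then $\chi(g,\infty)=(1+|g|^2)^{-1/2}\ge\delta$ forces $g$ to be bounded and pole-free, hence constant -- excluded. If $h\not\equiv\infty$, the inequality $|g-h|\ge\delta\sqrt{(1+|g|^2)(1+|h|^2)}\ge\delta$ shows that $g-h$ is zero-free and bounded away from $0$, so $1/(g-h)$ is a bounded entire function and therefore a constant; this constant is nonzero (otherwise $h\equiv\infty$), so $g-h\equiv k$ with $k\in\C\setminus\{0\}$. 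A pole of $g$ would be a common pole of $g$ and $h=g-k$, and near it $\chi(g,h)=|k|/\sqrt{(1+|g|^2)(1+|h|^2)}\to0$ -- impossible; hence $g$ and $h$ are entire. Finally $g$ is nonconstant entire, so $|g|$ is unbounded (Liouville), and along a sequence $\zeta_m$ with $|g(\zeta_m)|\to\infty$ one again gets $\chi\big(g(\zeta_m),g(\zeta_m)-k\big)\to0$, the desired contradiction. This contradiction shows $\mathcal A$ is normal, completing the proof.
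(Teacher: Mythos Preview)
Your argument is correct. The paper itself does not prove this lemma at all: it merely quotes it as \cite[Theorem~2]{GN-Wandering} and uses it as a black box. So there is no ``paper's own proof'' to compare with; what you have supplied is a self-contained proof where the paper offers none.

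A couple of small points, none of them genuine gaps. In the second Zalcman step you pass to a further subsequence of $(h_n)$; it is worth saying explicitly that $g_n\to g$ persists along that subsequence, which is what justifies $g_n(\zeta_n+\tau_n\eta)\to g(\zeta^\ast)$ locally uniformly in $\eta$. In the final rigidity step, your parenthetical ``(otherwise $h\equiv\infty$)'' for why the constant $1/(g-h)$ is nonzero is not quite the right reason: the clean justification is that $g-h$ is a genuine meromorphic function on $\C$ (we are in the case $h\not\equiv\infty$), hence not identically~$\infty$, so $1/(g-h)$ cannot be identically~$0$. Once $g-h\equiv k\ne0$, your exclusion of poles (a pole of $g$ would be a pole of $h=g-k$, contradicting $\chi(g,h)\ge\delta$ since $\chi(\infty,\infty)=0$) and the final Liouville/contradiction step are clean.

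As to what your approach buys: it gives a short, elementary, fully self-contained proof using only Zalcman's rescaling lemma, Picard's little theorem, and Liouville's theorem, whereas the paper relies on an external reference. The double-Zalcman trick (rescale the first coordinate to a nonconstant limit, then show the companion coordinate cannot blow up under the same rescaling) is exactly the right idea here.
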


Using this lemma, Theorem \ref{3ExceptionalFunctionsQuasiNormal} can
be deduced from \cite[Theorem 1]{GN-Wandering}  basically in the
same way as the well--known special case of Theorem
\ref{3ExceptionalFunctionsQuasiNormal} that $a_f,b_f,c_f$ are constant 
(see \cite[Theorem~A.5 and Theorem A.9]{Sc})
is deduced from the FNT.
In  order to make the paper self-contained and since the proof in \cite[Theorem~A.5]{Sc} is given only for the
special case of analytic functions and fixed exceptional values not
depending on $f$,
we provide full details.

In what follows we denote by $K_r(z_0):=\{z \in \C \, : \, |z-z_0|<r\}$ the
euclidean open disk with center $z_0 \in \C$ and radius $r>0$.
For a quasi--normal sequence $(f_n)$ in $\mathcal{M}(D)$ a point $p \in D$ is
called an irregular point for $(f_n)$ if the sequence $(f_n)$ fails to be normal at $p$.

\begin{proof}[Proof of Theorem \ref{3ExceptionalFunctionsQuasiNormal}]
Let $(f_n)_n$ be a sequence in $\mathcal{F}$. We write $a_n:= a_{f_n}$,
$b_n:= b_{f_n}$, $c_n:= c_{f_n}$.

By Lemma \ref{exceptfunctnormal} the families $\{ a_f\,|\,
f\in\mathcal{F}\}$, $\{ b_f\,|\, f\in\mathcal{F}\}$ and $\{ c_f\,|\,f\in\mathcal{F}\}$
are normal. Therefore there exists a subsequence of
$((f_n,a_n,b_n,c_n))_n$ which we continue to denote by
$((f_n,a_n,b_n,c_n))_n$ such that $(a_n)_n$, $(b_n)_n$ and $(c_n)_n$
converge locally uniformly in $D$ (with respect to the spherical
metric) to limit functions $a,b,c\in\mathcal{M}(D)\cup\{\infty\}$. 

After heavily extracting further subsequences we may assume that there are
at most $p$ accumulation points of the $a_n$-points of $f_n$. More
precisely: We can assume that there is a set $A\subset D$ consisting
of at most $p$ points such that the following holds:
\begin{itemize}
\item[1.]
For each $z_0\in A$ there exists a sequence $(z_k)_k$ in $D$
converging to $z_0$ and a subsequence $(f_{n_k})_k$ such that
$f_{n_k}(z_k)=a_{n_k}(z_k)$ for all $k$.
\item[2.]
For each $z_0\in D\setminus A$ there exists an $r>0$ such that the disk
$K_r(z_0)$ contains only finitely many points $z$ which are
$a_n$-points of $f_n$ for some $n$. 
\end{itemize}
In the same way we find a set $B\subset D$ consisting of at most $q$
points and a set $C\subset D$ consisting of at most $r$ points such
that, in the sense described above, the points in $B$ and $C$,
respectively, are the only accumulation points of the $b_n$-points and
of the $c_n$-points, respectively, of $f_n$.

Set $E:=A\cup B\cup C$. In each compact subset of $D\setminus E$ there are
only finitely many $a_n$-points, $b_n$-points and $c_n$-points of the
functions $f_n$. Therefore, by \cite[Theorem 1]{GN-Wandering},
$(f_n)_n$ is normal in $D\setminus E$, so by moving to a further subsequence
we can assume that $(f_n)_n$ converges locally uniformly in $D\setminus E$
(with respect to the spherical metric) to a limit function
$F\in\mathcal{M}(D\setminus E)\cup\{\infty\}$. 

This shows that there are at most $p+q+r$ irregular  points of
$(f_n)_n$. In order to improve this bound we use the well-known fact
that, by the maximum principle, the locally uniform convergence of a
sequence of {\it analytic} functions  on a punctured disk implies the
locally uniform convergence on the whole disk.

So let some point $z_0\in E$ be given. Then there exists a rigid
motion $T$ of the Riemann sphere (i.e. an isomorphism with respect to
the spherical metric) such that $T(a(z_0))\ne\infty$,
$T(b(z_0))\ne\infty$ and $T(c(z_0))\ne\infty$. Since the normality of
$(f_n)_n$ on a subdomain of $D$ or at a point in $D$ is equivalent to
the normality of $(T\circ f_n)_n$ (due to the fact that the spherical
derivative is invariant under post-composition with rigid motions of
the sphere and due to Marty's theorem) and since (\ref{DistanceBoundedQ})
as well as the other assumptions of our theorem are invariant under
rigid motions, we can replace $((f_n,a_n,b_n,c_n))_n$ by $((T\circ
f_n,T\circ a_n, T\circ b_n,T\circ c_n))_n$ if required. Therefore,
without loss of generality we may assume that $a(z_0)\ne\infty$,
$b(z_0)\ne\infty$ and $c(z_0)\ne\infty$. Now we consider two cases.

{\bf Case 1:} $F\not\equiv a$

Assume that $z_0\not\in A$. We choose $R>0$ such that the punctured
disk $K_{3R}(z_0)\setminus\{ z_0\}$ is contained in $D\setminus E$ and such that
$a$ is analytic in $K_{3R}(z_0$). Then $K_{3R}(z_0)\cap A=\emptyset$,
hence all but finitely many $f_n$ omit $a_n$ in $K_{2R}(z_0)$ since
otherwise the points where $f_n$ assumes $a_n$  would have
an accumulation point in $K_{3R}(z_0)$, contradicting the definition
of $A$. This means that $g_n:=\frac{1}{f_n-a_n}$ is analytic in
$K_{2R}(z_0)$ for all but finitely many $n$.  In the compact annulus
$K=\overline{K_{2R}(z_0)}\setminus K_R(z_0)$ the sequence $(g_n)_n$ converges
uniformly to $G:=\frac{1}{F-a}\not\equiv\infty$.  By the maximum
principle, the uniform convergence carries over to the whole disk
$K_{2R}(z_0)$. So $(f_n)_n$ itself converges uniformly (with respect
to the spherical metric) in this disk to $\frac{1}{G}+a$.

We conclude that the only possible irregular points of $(f_n)_n$ are
the points in $A$. In particular, there are at most $p\le q$ such
points. 

{\bf Case 2:} $F\equiv a$

Assume that $z_0\not\in B$. Then as in Case 1 we find an $R>0$ such that
$g_n:=\frac{1}{f_n-b_n}$ is analytic in $K_{2R}(z_0)$
for all but finitely many $n$ and such that $(g_n)_n$ converges
uniformly in the compact annulus $K=\overline{K_{2R}(z_0)}\setminus K_R(z_0)$ to
the analytic limit function $G:=\frac{1}{F-b}=\frac{1}{a-b}$. Again,
by the maximum principle we deduce the uniform convergence of this
sequence in the whole disk $K_{2R}(z_0)$, hence the uniform
convergence of $(f_n)_n$ in this disk.

So in this case irregular points of $(f_n)_n$ can occur only at the
points of $B$. Therefore their number is bounded by $q$. 
\end{proof}

\subsection{Proof of Theorem \ref{thm:main}}

We start with the following elementary lemma.

\begin{lemma} \label{Lem:PosAreaThreePoints}
Let $\alpha>0$ be a real number. Then there exists a $\delta>0$ such that
any measurable subset $E$ of the Riemann sphere with spherical area at
least $\alpha$ contains three points $a,b,c\in E$ such that
$$\min\{\sigma(a,b),\sigma(a,c),\sigma(b,c)\}\ge \delta.$$
\end{lemma}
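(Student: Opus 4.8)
The plan is to prove the contrapositive in quantitative form. Let $A(\delta)$ denote the spherical area of a spherical cap (disk) of radius $\delta$; by homogeneity of the sphere under its rigid motions this depends only on $\delta$, and $A(\delta)\to 0$ as $\delta\to 0^+$ (with the paper's normalization one in fact has an explicit elementary formula, but this is not needed). Fix $\delta>0$ so small that $2A(\delta)<\alpha$; this is the $\delta$ I claim works. The combinatorial heart of the argument is the trivial-but-useful remark that a set containing no three pairwise $\delta$-far points must be covered by two caps of radius $\delta$.

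To carry this out, let $E\subseteq\hat{\C}$ be measurable with spherical area $\ge\alpha>0$. Since $\hat{\C}$ is compact, any subset of $\hat{\C}$ whose distinct points are pairwise at spherical distance $\ge\delta$ has cardinality bounded by the packing number $N(\delta)<\infty$. Hence among all such ``$\delta$-separated'' subsets of $E$ there is one, call it $S$, of maximal cardinality; note $S\ne\emptyset$ since $E\ne\emptyset$. If $|S|\ge 3$, then any three distinct points $a,b,c\in S\subseteq E$ satisfy $\min\{\sigma(a,b),\sigma(a,c),\sigma(b,c)\}\ge\delta$, which is exactly the assertion of the lemma.

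It remains to rule out $|S|\le 2$. Suppose $S\subseteq\{p,q\}$. By maximality of $S$, for every $z\in E$ the set $S\cup\{z\}$ fails to be $\delta$-separated, so $z$ lies within spherical distance $\delta$ of $p$ or of $q$; thus $E$ is contained in the union of at most two spherical caps of radius $\delta$. Therefore the spherical area of $E$ is at most $2A(\delta)<\alpha$, contradicting the hypothesis. Hence $|S|\ge 3$ and the proof is complete.

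I do not expect any genuine obstacle here: the only quantitative input is $A(\delta)\to 0$ as $\delta\to0^+$, and everything else is the covering observation together with the finiteness of spherical packing numbers, which makes the ``maximal $\delta$-separated subset'' well defined without any appeal to Zorn's lemma.
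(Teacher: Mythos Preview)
Your proof is correct and rests on the same covering idea as the paper's: if $E$ contains no three pairwise $\delta$-separated points, then $E$ is contained in the union of at most two small metric balls and hence has area less than $\alpha$. The organizational details differ slightly. The paper first passes (informally) from the sphere to the Euclidean plane, then builds the three points in two stages with two different radii: it picks $a,b\in E$ with $|a-b|\ge r:=\sqrt{\alpha/\pi}$ (else $E$ lies in one disk of radius $<r$), and then $c\in E$ with $\min\{|a-c|,|b-c|\}\ge r/\sqrt{2}$ (else $E$ lies in two disks of radius $<r/\sqrt{2}$), arriving at the explicit constant $\delta=r/\sqrt{2}$. You instead stay on the sphere, fix a single $\delta$ with $2A(\delta)<\alpha$, and obtain all three points at once from a maximal $\delta$-separated subset, whose finiteness you justify via the packing number. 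Your route avoids the change of metric and the two-radius bookkeeping and is a bit cleaner; the paper's route has the virtue of yielding an explicit $\delta$ in terms of $\alpha$.
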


\begin{proof}
Without loss of generality, and to simplify our considerations, we replace the sphere by
the euclidean plane $\C$ and the spherical metric by the euclidean metric.
Let $E\subseteq \C$ be some fixed measurable set with euclidean area
$\ge\alpha$.
First, we can find two points $a,b\in E$ such that
$$|a-b|\ge \sqrt{\frac{\alpha}{\pi}}=:r\, .$$ (Otherwise, $E$
would be contained in an euclidean  disc of radius less than $r$, and hence of area
less than $\pi r^2=\alpha$.) Then, we can find a third point
$c\in E$ such that $$\min\{ |a-c|,|b-c|\}\ge
\frac{r}{\sqrt{2}}$$ since otherwise $E$ would be contained in the
union of two discs centered at $a$ and $b$, respectively, and with
radii less than $r/\sqrt{2}$, and the area of the union of these discs would
be less than $$2\pi \cdot\left(\frac{r}{\sqrt{2}}\right)^2=\pi
r^2=\alpha\, .$$  So the assertion of our lemma holds with
$\delta:=r/\sqrt{2}$.
\end{proof}

We are now in a position to prove

\begin{theorem} \label{IntCrit}
  Let $D \subseteq \C$ be a domain, $C>0$, and $\mathcal{F}$ be a family
  of functions meromorphic on $D$ such that
\begin{equation}\label{IntCond}
\frac{1}{\pi}\int \limits_{D} (f^\#(z))^2\,dx\,dy\le C \qquad \mbox{ for all }
f\in\mathcal{F}\, .
\end{equation}
Then $\mathcal{F}$ is quasi-normal of order at most $C$.
\end{theorem}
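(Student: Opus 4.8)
The plan is to deduce Theorem \ref{IntCrit} from the wandering--exceptional--functions criterion, Theorem \ref{3ExceptionalFunctionsQuasiNormal}, by unwinding the geometric content of hypothesis (\ref{IntCond}): it says exactly that the image $f(D)$ has spherical area at most $C\pi$, counted with multiplicity. To make this quantitative, let $d\mu$ denote the spherical area measure on $\hat{\C}$, normalized so that $\mu(\hat{\C})=\pi$. Since $\big(f^{\#}\big)^2\,dx\,dy$ is precisely the pull-back under $f$ of $d\mu$, the area (change of variables) formula for the meromorphic map $f\colon D\to\hat{\C}$ yields
\[
\frac{1}{\pi}\iint_{D}\big(f^{\#}(z)\big)^2\,dx\,dy\;=\;\frac{1}{\pi}\int_{\hat{\C}}n(f,w)\,d\mu(w)\, ,
\]
where $n(f,w)$ is the number of solutions of $f(z)=w$ in $D$ counted with multiplicity (so $n(f,\infty)$ counts poles). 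By (\ref{IntCond}) the right-hand integral is $\le C\pi<\infty$, so in particular $n(f,w)<\infty$ for $\mu$-almost every $w$.

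Next I would run a pigeonhole argument, uniformly in $f\in\mathcal{F}$. Put $N:=\lfloor C\rfloor$, so that $N+1>C$, and for each $f$ consider the measurable set
\[
E_f:=\{\,w\in\hat{\C}\ :\ n(f,w)\le N\,\}\, .
\]
Chebyshev's inequality and the identity above give $\mu(\hat{\C}\setminus E_f)\le\frac{1}{N+1}\int_{\hat{\C}}n(f,w)\,d\mu(w)\le\frac{C\pi}{N+1}$, whence
\[
\mu(E_f)\;\ge\;\pi\Bigl(1-\tfrac{C}{N+1}\Bigr)\;=:\;\alpha\;>\;0\, ,
\]
with $\alpha$ depending only on $C$. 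Applying Lemma \ref{Lem:PosAreaThreePoints} with this $\alpha$ produces a $\delta>0$, again depending only on $C$, such that for every $f\in\mathcal{F}$ one can pick three points $a_f,b_f,c_f\in E_f$ with pairwise spherical distance $\ge\delta$.

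Finally I would apply Theorem \ref{3ExceptionalFunctionsQuasiNormal} with $p=q=r=N$, $\varepsilon=\delta$, and the \emph{constant} exceptional functions $a_f,b_f,c_f\in\mathcal{M}(D)\cup\{\infty\}$. Indeed, by construction $f$ assumes each of $a_f,b_f,c_f$ at most $N$ times counting multiplicity, hence at most $N$ times when multiplicities are ignored, and (\ref{DistanceBoundedQ}) holds because $a_f,b_f,c_f$ are constants at mutual spherical distance $\ge\delta$. Theorem \ref{3ExceptionalFunctionsQuasiNormal} then gives that $\mathcal{F}$ is quasi-normal of order at most $q=N=\lfloor C\rfloor\le C$, as claimed.

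Since the genuinely hard part --- extracting local uniform convergence off a small exceptional set --- is already packaged in Theorem \ref{3ExceptionalFunctionsQuasiNormal}, the remaining work is essentially bookkeeping, and I expect the only point needing some care to be the area identity in the first paragraph. There one should invoke the area formula for the holomorphic map $f$ (which is smooth off an isolated set of critical points and poles), observe that the local multiplicity is exactly what appears on the right, and note that the critical values of $f$ form a $\mu$-null set --- as does $\{w:n(f,w)=\infty\}$ --- so these exceptional sets do not affect the Chebyshev estimate; measurability of $w\mapsto n(f,w)$ is classical. Everything else, including the uniformity of $\alpha$ and $\delta$ over $\mathcal{F}$, is immediate from the uniform bound in (\ref{IntCond}).
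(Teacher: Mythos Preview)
Your proof is correct and follows essentially the same route as the paper: both arguments set $N=\lfloor C\rfloor$, use the area interpretation of (\ref{IntCond}) together with a Chebyshev-type estimate to find for each $f$ a set $E_f$ of spherical measure at least $\pi(1-C/(N+1))$ on which $f$ takes values at most $N$ times, then apply Lemma~\ref{Lem:PosAreaThreePoints} and Theorem~\ref{3ExceptionalFunctionsQuasiNormal} with constant exceptional functions. Your version is in fact slightly more explicit about the area formula and the Chebyshev step, but the structure and the key ideas coincide.
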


\begin{example}
  The estimate for the order of quasi-normality in
Theorem \ref{IntCrit} is best possible. This is illustrated by the
family of the functions $f_n:= n\cdot P$ on the unit disk $\D$ where $P$ is an
arbitrary polynomial of degree $m$ with exactly $m$ distinct zeros in
the unit disk (for example, $P(z)=2z^m-1$). Clearly, $(f_n)$ is
quasi-normal of order $m$, the irregular points being the zeros of
$P$. Since $f_n$ assumes each value in $\C$ at most $m$ times,
(\ref{IntCond}) holds with $C=m$.
\end{example}

\begin{proof}[Proof of Theorem \ref{IntCrit}]
Recall that the integral $$\int \limits_{D} \big(f^\#(z)\big)^2\,dx\,dy$$
is the area of the image domain $f(D)$ of $D$ under the mapping $f$ on
the Riemann sphere $\hat{\C}$, determined with respect to multiplicities, while
$\pi$ is the area of the whole Riemann sphere. Let $m$ denote the largest
integer less or equal than $C$ and
let  $$\varepsilon:=\pi\cdot\left( 1-\frac{C}{m+1}\right)>0\, .$$
Then
for each $f\in\mathcal{F}$ there is a set $E_f\subseteq \hat{\C}$ of spherical measure at
least $\varepsilon$ such that all values in $E_f$ are assumed by $f$
at most $m$ times. Otherwise, all values in a set of spherical measure
larger than $\pi-\varepsilon$ would be assumed at least $m+1$ times,
which would imply
$$\int_{D} (f^\#)^2(z)\,dx\,dy >
(m+1)(\pi-\varepsilon)=\pi \cdot C,$$
contradicting (\ref{IntCond}).
 By Lemma
\ref{Lem:PosAreaThreePoints} for each $f\in\mathcal{F}$ we can find for each
$f\in\mathcal{F}$ three points $a_f,b_f,c_f\in E_f$ such that 
$$\min\{\sigma(a_f,b_f),\sigma(a_f,c_f),\sigma(b_f,c_f)\}\ge
\delta$$
with an universal constant $\delta>0$ depending only on $\varepsilon$,
but not on the function $f\in\mathcal{F}$. Since each $f\in\mathcal{F}$ assumes the
values $a_f,b_f,c_f$ at most $m$ times,  the family  $\mathcal{F}$ is
quasi-normal of order at most $m$ by Theorem
\ref{3ExceptionalFunctionsQuasiNormal}
\end{proof}

By  the converse of Bloch's principle \cite[Chapter 4]{Sc},
Theorem \ref{IntCrit} should encapsulate  a
corresponding removable singularity criterion. Such a criterion is in fact a simple consequence of Picard's Great Theorem:

\begin{lemma} \label{lem:heb}
Let $f$ be meromorphic on the punctured disk $K_R(z_0) \setminus \{z_0\}$ such
that
\begin{equation} \label{eq:remove}
 \int \limits_{K_R(z_0)\setminus \{z_0\}} \big(f^{\sharp}(z)\big)^2 \, dx dy <\infty
 \, .
 \end{equation}
  Then $f$ has a meromorphic extension to $K_R(z_0)$.
  \end{lemma}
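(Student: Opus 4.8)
The plan is to show that the finiteness of the spherical area integral~\eqref{eq:remove} forces $f$ to omit a set of values of positive spherical measure near $z_0$, and then to invoke Picard's Great Theorem. First I would observe that if $f$ had an essential singularity at $z_0$, then by Picard's Great Theorem $f$ would assume every value in $\hat{\C}$, with at most two exceptions, infinitely often in every punctured neighbourhood $K_\rho(z_0)\setminus\{z_0\}$ with $0<\rho<R$. Consequently, for every such $\rho$, the image $f\big(K_\rho(z_0)\setminus\{z_0\}\big)$ covers all of $\hat{\C}$ except at most two points, i.e.\ it has full spherical measure $\pi$. Counting multiplicities, the spherical area of the image over the punctured disk $K_\rho(z_0)\setminus\{z_0\}$ is therefore at least $\pi$ for every $\rho\in(0,R)$.

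The key quantitative step is that this conclusion is incompatible with~\eqref{eq:remove}. Indeed, write
$$ I(\rho):=\int \limits_{K_\rho(z_0)\setminus \{z_0\}} \big(f^{\sharp}(z)\big)^2 \, dx\,dy\, ,$$
which is the spherical area of $f\big(K_\rho(z_0)\setminus\{z_0\}\big)$ counted with multiplicity, hence $I(\rho)\ge \pi$ for all $\rho\in(0,R)$ by the previous paragraph. But $I(\rho)\to 0$ as $\rho\to 0^+$: this is just the absolute continuity of the (finite, by~\eqref{eq:remove}) measure $\big(f^{\sharp}\big)^2\,dx\,dy$ on $K_R(z_0)\setminus\{z_0\}$ with respect to Lebesgue measure, applied to the shrinking sets $K_\rho(z_0)\setminus\{z_0\}$ whose Lebesgue measure tends to $0$. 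This contradiction rules out an essential singularity at $z_0$.

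It remains to treat the case of a non-essential (i.e.\ removable or pole-type) isolated singularity, but there the conclusion is immediate: if $\lim_{z\to z_0}f(z)$ exists in $\hat{\C}$, then $f$ extends to a meromorphic function on $K_R(z_0)$ (a finite limit gives a holomorphic extension, an infinite limit a pole). So the only obstacle to a meromorphic extension would be an essential singularity, which we have just excluded. The main point to be careful about is the bookkeeping in the phrase ``counting multiplicities'': one must note that the plain area of the image set $f\big(K_\rho(z_0)\setminus\{z_0\}\big)$ is already $\ge\pi-0=\pi$ (up to the two Picard exceptional values, a null set), so the inequality $I(\rho)\ge\pi$ holds even without invoking multiplicities, and there is no subtlety hidden there. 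I expect no real difficulty beyond this; the argument is genuinely a ``simple consequence of Picard's Great Theorem'' as the text asserts.
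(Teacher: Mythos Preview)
Your proof is correct and takes essentially the same approach as the paper: both combine Picard's Great Theorem with the fact that finiteness of the integral~\eqref{eq:remove} forces $\int_{K_\rho(z_0)\setminus\{z_0\}}(f^\sharp)^2\,dx\,dy\to 0$ as $\rho\to 0^+$. The paper phrases the argument as a contrapositive (small image area near $z_0$ $\Rightarrow$ at least three omitted values $\Rightarrow$ no essential singularity), while you argue directly (essential singularity $\Rightarrow$ full image by Picard $\Rightarrow$ $I(\rho)\ge\pi$ for all $\rho$), but this is only a difference in organization.
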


In order to see how this follows from Picard's Great Theorem note that 
  (\ref{eq:remove}) is equivalent to the convergence of the series 
  $$ \sum \limits_{n=1}^{\infty} \int \limits_{\frac{R}{2^{n}}<|z-z_0|<
    \frac{R}{2^{n-1}}}  \big(f^{\sharp}(z)\big)^2 \, dx dy \, .$$
Hence,  for sufficiently small $\rho>0$, the function $f$ restricted to $K_{\rho}(z_0)\setminus
\{z_0\}$ would omit a set of spherical area less than $\pi/2$, say, so $f$
would omit there (at least) three values, so cannot have an essential
singularity at $z_0$ by Picard's Great Theorem.

\begin{proof}[Proof of Theorem \ref{thm:main}]
  We know from Theorem \ref{IntCrit} that $\mathcal{F}_C$ is quasi--normal of order at most $C$.
Let $(f_n)$ be a sequence in $\mathcal{F}_C$. Then the quasi--normality of
$\mathcal{F}_C$ implies that there exists a subsequence which we still denote
by $(f_n)$ and a finite set $S \subseteq D$ such that $(f_n)$ converges
locally uniformly on $D \setminus S$ to some limit function $f$ which is
meromorphic in $D \setminus S$ and such that $(f_n)$ fails to be normal in any
neighborhood of any point of $S$.

If $S=\emptyset$, then $f$ is meromorphic on $D$ and clearly also belongs to
$\mathcal{F}_C$, so alternative (1) in Theorem \ref{thm:main} holds.
We assume from now on that $S$ is not empty.
In order to prove that (2a) holds, let $p \in S$ and choose $\eps>0$
such that the closed disk $K$ of radius $\eps$ centered at $p$ does not contain any
other point of $S$. Then $(f_n)$ converges spherically uniformly on each annulus $\delta
\le |z-p| \le \eps$ to $f$, so
$$ \frac{1}{\pi}\int \limits_{K \setminus \{p\}} \big( f^{\sharp}(z) \big)^2 \, dxdy \le
C \, .$$
Therefore, $f$ has a meromorphic extension to $K$ by Lemma \ref{lem:heb}, and
hence to all of $D$. We denote this extension also by $f$. It follows at once that $f
\in \mathcal{F}_C$.

Since $(f_n)$ is not normal at $p$,  Marty's theorem \cite{Ma} yields  a
sequence $(z_n)$ in $D$ converging to $p$ such that $f_n^{\sharp}(z_n) \to +\infty$.

If each $f_n$ is locally univalent on $D$, then $f$ is either a constant
function or
also locally univalent in $D \setminus S$ by the locally uniform convergence of
$(f_n)$ to $f$ on $D
\setminus S$ and Hurwitz' theorem. If $f$ would be locally univalent on $D
\setminus S$, then
with $p\in S$ and $\eps>0$ as above, there would be a constant $c>0$ such that $f^{\sharp}(z)
\ge 2 c$ for all $|z-p|=\eps$. Hence $f_n^{\sharp}(z) \ge c$ for all
$|z-p|=\eps$ and all $n$ sufficiently large. Since each $f_n$ is locally
univalent on $D$, so $f_n^{\sharp}$ is never zero in D, the equation  $-\Delta
\log f^{\sharp}_n=4 \big( f_n^{\sharp} \big)^2$ shows that $\log f_n^{\sharp}$
is superharmonic on $|z-p| <\eps$ and thus $f^{\sharp}_n(z) \ge c$ for all 
 $|z-p| \le \eps$ by the minimum principle. But then
as we have already pointed out in Section \ref{NBLU},  the main result in \cite{GN} implies that $(f_n)$
is normal in $|z-p|<\eps$, contradicting $p \in S$. Hence, $f$ is in fact constant on $D$, and we have proved (2a).

We finally prove (2b). Since
$$ \left( \big( f^{\sharp}_n  \big)^2 \right) $$ is bounded in $L^1(D)$
we can apply the weak--$*$ compactness principle for bounded Borel measures
which yields a (unique) nonnegative bounded Borel measure $\mu$ on $D$ such that -- after taking
a subsequence -- we have
$$  \big( f^{\sharp}_n \big)^2 \to \mu$$
in the measure theoretic sense, that is,
\begin{equation} \label{eq:conv}
\int \limits_D \big( f^{\sharp}_n(z) \big)^2 \psi(z) \, dx dy \to \int
\limits_{D} \psi \, d\mu
\end{equation}
for any $\psi \in C_c(D)$. Here, $C_c(D)$ denotes the set of continuous
functions on $D$ with compact support. It follows that for any such $\psi$,
\begin{eqnarray*} \int \limits_D \psi \, d\mu &=& \sum \limits_{p \in S}
                                                  \left( \lim \limits_{n \to
  \infty} \int \limits_{K_{\eps}(p)}  \big( f^{\sharp}_n(z) \big)^2
\big(\psi(z)-\psi(p)\big) \, dxdy+\psi(p) \lim \limits_{n \to \infty} \int \limits_{K_{\eps}(p)}  \big(
  f^{\sharp}_n(z) \big)^2 \, dx dy \right)\\ & & + \lim \limits_{n \to \infty} \int \limits_{D \setminus
  \bigcup \limits_{p \in S} K_{\eps}(p)}  \big( f^{\sharp}_n(z) \big)^2 \, \psi(z)\, dx
                                                 dy 
\end{eqnarray*}
for any $\eps>0$ sufficiently small. Since $f_n \to f$ locally uniformy in $D \setminus
S$, $f_n \in \mathcal{F}_C$ and $\psi$ is continuous, this yields

$$ \int \limits_{D} \psi \, d\mu=\sum \limits_{p \in S} \mu(\{p\})  \psi(p)+\int
\limits_{D} \big(f^{\sharp}(z) \big)^2 \, \psi(z) \, dx dy \, .$$
Hence,
  $$ \mu=\sum \limits_{p \in S} \mu(\{p\}) \delta_{p}+\big( f^{\sharp}  \big)^2
  \,  $$
  in the sense of measures.
  Now, $\mu(\{ p\}) \ge \pi$ for each $p \in S$, since otherwise there would exist a
  function $\psi \in C_c(D)$ with  $0 \le \psi \le 1$ such that  $\psi \equiv 1$ on some disk
   centered at $p$  and
  $$ \int \limits_{D} \psi \, d\mu<\pi \, .$$
In view of (\ref{eq:conv}), this would imply that eventually
  $$ \int \limits_{K_{\eps}(p)} \big( f^{\sharp}_n(z) \big)^2 \, dxdy<\pi \, ,$$
  so $(f_n)$ would be normal on $K_{\eps}(p)$ by Theorem \ref{IntCrit},
  contradicting  $p \in S$. Hence $\alpha_p:=\mu(\{p\})/\pi \ge 1$.
\end{proof}

\section{Final Remark: Gromov's compactness theorem for pseudoholomorphic curves}

Theorem A and Theorem \ref{thm:main}  are   mere examples of a general
phenomenon. 
Thinking of
$$ \int \limits_{D} \big( f^{\sharp} (z) \big)^2 \, dx dy $$
as ``Energy'', the ``Mass Concentration Property'' (2b) tells us that a
discrete loss of energy when passing from $(f_n)$ to the limit $f$ has to be compensated
by  ``bubbles'' appearing as $n$ tends to infinity. This is a key idea in
Geometric Analysis, which is impressively demonstrated in  the proof of
Gromov's compactness theorem \cite{G} or  the work of Sacks and
Uhlenbeck \cite{SU} on minimal immersions of $2$-spheres.

\end{document}